\newtheorem{theorem}{Theorem}[section]
\newtheorem{lemma}[theorem]{Lemma}
\begin{document}
\title{An almost Zoll affine surface}
\author{Peter B Gilkey}
\address{Mathematics Department, University of Oregon, Eugene OR 97403-1222, US} \email{gilkey@uoregon.edu}
\begin{abstract}{An affine surface is said to be an affine Zoll surface if all affine geodesics
close smoothly. It is said to be an affine almost Zoll surface if thru any point, every affine geodesic
but one closes smoothly (the exceptional geodesic is said to be alienated as it does not return). 
We exhibit an affine structure on the cylinder which is almost Zoll. This
structure is geodesically complete, affine Killing complete, and affine symmetric.\newline Key words: affine manifold, Zoll surface
\newline Subject class 53C21}
\end{abstract}
\maketitle
\section{Introduction} A Riemannian manifold is said to be a {\it Zoll manifold} the geodesics
are all simple closed curves of the same length. Zoll~\cite{Z03} showed that the sphere $S^2$ admits
many such metrics in addition to the round one. Later authors called these surfaces {\it auf Wiedersehensfl\"achen}
or ``until we meet again" surfaces. In Spanish, this becomes {\it te veo de nuevo superficie}.

The only compact surfaces which admit Zoll metrics
are the sphere $S^2$ and real projective space $\mathbb{RP}^2$.
 Green~\cite{G63} showed that the only Zoll metric on $\mathbb{RP}^2$
is the standard homogeneous metric, up to isometry and rescaling; this was later extended by
Pries~\cite{P08} to show that if all the geodesics of a metric on $\mathbb{RP}^2$ are closed, then
the metric is the standard homogeneous metric, up to isometry and rescaling. One can consider analogous
questions in the Lorentzian setting -- see, for example, the work of Mounoud and Suhr~\cite{MS16}. Zoll surfaces
have been used in many contexts; see, for example, the work on Balacheff et al.~\cite{Bx09} concerning a geodesic
length conjecture. Zoll surfaces also have been considered in the orbifold context; see, for example, Lange~\cite{L17}.
We refer to Besse~\cite{B78} for a discussion
of more of the history of the subject than is available here and also to the references cited above.

An affine surface $\mathcal{M}$ is a pair $(M,\nabla)$ where $M$ a 2-dimensional manifold $M$ and $\nabla$ is a torsion
free connection on the tangent bundle of $M$. A diffeomorphism $\Phi$ from $(M_1,\nabla_1)$ to $(M_2,\nabla_2)$ is said to be {\it affine}
if $\Phi$ intertwines $\nabla_1$ and $\nabla_2$. An affine surface $\mathcal{M}$ is said to be {\it homogeneous} if the group of affine
diffeomorphisms acts transitively on $\mathcal{M}$. A vector field $X$ on $\mathcal{M}$ is said to be an {\it affine Killing vector field}
if the (locally defined) flows of $X$ are (locally defined) affine diffeomorphisms of $\mathcal{M}$ or, equivalently by Kobayashi and Nomizu~\cite{KN63}
the Lie derivative $\mathcal{L}_X\nabla=0$. The Lie bracket makes the set $\mathcal{K}(\mathcal{M})$ of affine Killing vector fields into a Lie algebra.

Let $R(X,Y):=\nabla_X\nabla_Y-\nabla_Y\nabla_X-\nabla_{[X,Y]}$ be the curvature
operator. The Ricci tensor is given by $\rho(X,Y):=\operatorname{Tr}(Z\rightarrow R(X,Y)Z)$. Because the Ricci tensor $\rho$
need not be symmetric in the affine context, one introduces the {\it symmetric Ricci tensor}
 $\rho_s(X,Y):=\frac12\{\rho(X,Y)+\rho(Y,x)\}$.  We say two affine structures $\nabla$ and $\tilde\nabla$ are {\it projectively equivalent}
if there exists a smooth 1-form $\omega$ so that one may express $\tilde\nabla_XY=\nabla_XY+\omega(X)Y+\omega(Y)X$.
Two affine structures are projectively equivalent if and only if their unparametrized geodesics coincide; see for example,
Schouten~\cite{S54};
thus it is natural to pass to projective structures. Note that projective equivalence does not preserve geodesic completeness since the
parametrized geodesics will in general be different. The geometries are said to be {\it strongly projectively equivalent} if $\omega$ is exact, i.e.
$$
\tilde\nabla_XY=\nabla_XY+X(\phi)Y+Y(\phi)X\quad\text{for some}\quad\phi\in C^\infty(M)\,.
$$
In this setting, 
we shall say that $\phi$ provides a {\it strong projective equivalence} from $\mathcal{M}=(M,\nabla)$ to 
${}^\phi\mathcal{M}:=(M,\tilde\nabla)$. We say that $\mathcal{M}$ is {\it strongly
projectively flat} if $\mathcal{M}$ is strongly projectively equivalent to a flat connection.
There is a question of which projective structures can be metrized, i.e.
arise as the Levi-Civita connection of some metric; we refer to Bryant et al.~\cite{B09} for further details concerning this question.

One says that an affine surface is an {\it affine Zoll surface}
if all of the geodesics are simple closed curves; this is a projective question. LeBrun and Mason~\cite{LM02}
(see also later related work in \cite{LM09})
showed that the only compact surfaces which admit affine Zoll structures are $\mathbb{S}^2$ and $\mathbb{RP}^2$.
Consequently, it is natural to weaken the condition just a little. We say that an affine surface is {\it almost Zoll} if for every
point of the surface, there is a single geodesic, which will be called the {\it alienated geodesic}, which does not close
and such that all other geodesics thru that point are simple closed curves which return to the initial point.
In this brief note, we present several examples of this phenomena. In Section~\ref{S2}, we discuss the quasi-Einstein equation
and present its basic properties in Theorem~\ref{Thm1}.
In section~\ref{S3}, we introduce the affine surfaces $\mathcal{M}(c)$ that will form the focus of our investigation. In Theorem~\ref{Thm2},
we show that $\mathcal{M}(c)$ is affine homogeneous, we determine the Lie algebra of affine Killing vector fields of $\mathcal{M}(c)$,
and we determine the connected component $\mathcal{G}(c)$ of the 4-dimensional Lie group of affine diffeomorphisms. In
Theorem~\ref{Thm3}, we show the geometries $\mathcal{M}(c)$ and $\mathcal{M}(\tilde c)$ are neither strongly projectively equivalent
nor locally affine equivalent for $c\ne\tilde c$.

 \section{The quasi-Einstein equation}\label{S2}
 The solutions to the quasi-Einstein equation is a fundamental invariant in the theory of affine surfaces.
We define the {\it Hessian} by setting:
$$
\mathcal{H}\phi=(\partial_{x^i}\partial_{x^j}\phi-\Gamma_{ij}{}^k\partial_{x^k}\phi)dx^i\otimes dx^j\in S^2(M)\,.
$$
Let
$\mathcal{Q}(\mathcal{M})$ be the solution space of the {\it quasi-Einstein equation}:
$$
\mathcal{Q}(\mathcal{M}):=\{\phi\in C^\infty(M):\mathcal{H}\phi+\phi\rho_s=0\}\,.
$$
There is a close relationship between strong projective equivalence and the solutions of the quasi-Einstein equation.
We refer to Brozos-V\'{a}zquez et al.~\cite{BGGV16} and to Gilkey and Valle-Regueiro~\cite{GV18} for the proof of the
following result as well as a further discussion of the quasi-Einstein equation and its use through the modified Riemannian extension
to study Walker metrics on the cotangent bundle of an affine surface.
\begin{theorem}\label{Thm1}
Let $\mathcal{M}=(M,\nabla)$ be a simply connected affine surface.
\begin{enumerate}
\item $\dim\{\mathcal{Q}(\mathcal{M})\}\le 3$. 
\item $\dim\{\mathcal{Q}(\mathcal{M})\}=3$ if and only if $\mathcal{M}$ is strongly projectively flat.
\item $\mathcal{Q}({}^\phi\mathcal{M})=e^\phi\mathcal{Q}(\mathcal{M})$.
\item Let $\mathcal{M}_1$ and $\mathcal{M}_2$ be two strongly projectively flat affine surfaces and 
let $\Phi$ be a diffeomorphism from $M_1$ to $M_2$.
If $\Phi^*\mathcal{Q}(\mathcal{M}_2)=\mathcal{Q}(\mathcal{M}_1)$,
then $\Phi$ is an affine morphism from $\mathcal{M}_1$ to $\mathcal{M}_2$.
\end{enumerate}\end{theorem}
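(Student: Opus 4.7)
The plan is to dispatch (1) and (3) by local computations, deduce (2) from them together with an integrability analysis, and handle (4) via a reconstruction principle that recovers the connection from its quasi-Einstein data. For (1), I would rewrite the quasi-Einstein equation in local coordinates as
$$ \partial_i \partial_j \phi = \Gamma_{ij}{}^k \partial_k \phi - \phi(\rho_s)_{ij}, $$
and introduce $\phi_i := \partial_i \phi$ to convert this second-order equation into a first-order linear system in the triple $(\phi, \phi_1, \phi_2)$. Standard ODE uniqueness along paths, combined with the simple connectedness of $M$ to guarantee path-independence, shows that the map sending $\phi \in \mathcal{Q}(\mathcal{M})$ to its 1-jet $(\phi(p), \phi_1(p), \phi_2(p))$ at any fixed $p$ is injective, whence $\dim \mathcal{Q}(\mathcal{M}) \le 3$.

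Part (3) is a direct computation. Under strong projective change the Christoffels transform as $\tilde{\Gamma}_{ij}{}^k = \Gamma_{ij}{}^k + \delta_i^k \partial_j \phi + \delta_j^k \partial_i \phi$, and there is a comparable explicit formula for $\tilde\rho_s$. I would substitute $\tilde\psi = e^\phi \psi$ into $\tilde{\mathcal{H}}\tilde\psi + \tilde\psi\,\tilde\rho_s$, apply the Leibniz rule, and verify that the cross terms proportional to $\partial\phi\cdot\partial\psi$ and $(\partial\phi)^2\psi$ cancel identically, leaving $e^\phi(\mathcal{H}\psi + \psi \rho_s)$. From (3), part (2) follows in two steps: if $\mathcal{M}$ is strongly projectively flat, (3) reduces the count to the flat case, where in flat coordinates the equation becomes $\partial_i\partial_j \psi = 0$ and is solved by the three-dimensional space of affine functions; conversely, if $\dim \mathcal{Q}(\mathcal{M}) = 3$ then every initial 1-jet is realised by a solution, forcing the integrability conditions of the first-order system to hold as polynomial identities in $(\phi, \phi_1, \phi_2)$, and the vanishing of the coefficients then produces an exact one-form $d\phi_0$ whose strong projective action flattens $\nabla$.

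Part (4) is the conceptual core and where I expect the main obstacle. The strategy is a reconstruction principle: when $\dim \mathcal{Q}(\mathcal{M}) = 3$, both $\nabla$ and $\rho_s$ are recoverable from $\mathcal{Q}(\mathcal{M})$ pointwise. At any $p \in M$, the isomorphism from (1) lets me select the basis $\{\phi^{(0)}, \phi^{(1)}, \phi^{(2)}\}$ of $\mathcal{Q}(\mathcal{M})$ dual to the evaluation map, so that $\phi^{(0)}(p)=1$ with $\partial_i\phi^{(0)}(p)=0$, and $\phi^{(k)}(p)=0$ with $\partial_i\phi^{(k)}(p)=\delta_i^k$ for $k=1,2$. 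Evaluating the quasi-Einstein equation on this basis at $p$ then yields
$$ (\rho_s)_{ij}(p) = -\partial_i \partial_j \phi^{(0)}(p), \qquad \Gamma_{ij}{}^k(p) = \partial_i \partial_j \phi^{(k)}(p). $$
Given strongly projectively flat surfaces $\mathcal{M}_1$ and $\mathcal{M}_2$ with $\Phi^*\mathcal{Q}(\mathcal{M}_2) = \mathcal{Q}(\mathcal{M}_1)$, applying this reconstruction on both sides and tracking how the dual basis transforms under $\Phi^*$ forces $\Phi$ to intertwine the Christoffels of $\nabla_1$ and $\nabla_2$, so $\Phi$ is an affine morphism. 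The main technical difficulty will be making this intertwining genuinely coordinate-free: the identification of $\Gamma_{ij}{}^k$ with second partial derivatives of a particular dual basis element depends on a choice of frame. I expect the cleanest route is to reformulate the reconstruction as saying that the second-order linear operator $L\phi := \mathcal{H}\phi + \phi\rho_s$, taking values in $S^2 T^*M$, is determined up to a nonvanishing scalar by its kernel, with the scalar then pinned down by matching the identity principal symbol; the pullback hypothesis then gives $\Phi^*L_2 = L_1$ directly, from which the affine-morphism property follows.
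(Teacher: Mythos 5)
The paper does not actually prove Theorem~\ref{Thm1}; it defers entirely to the cited references \cite{BGGV16} and \cite{GV18}, so there is no internal proof to compare against. That said, your outline reconstructs essentially the arguments of those references: the injectivity of the $1$-jet evaluation map for the first-order prolongation of the quasi-Einstein system gives (1); the conformal transformation $\psi\mapsto e^\phi\psi$ under the change $\tilde\Gamma_{ij}{}^k=\Gamma_{ij}{}^k+\delta_i^k\partial_j\phi+\delta_j^k\partial_i\phi$ gives (3); complete integrability of the prolonged system gives (2); and the pointwise recovery of $\Gamma_{ij}{}^k$ and $(\rho_s)_{ij}$ from the $2$-jets of a $1$-jet-adapted basis of $\mathcal{Q}$ gives (4) (your signs in the reconstruction formulas are correct). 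Two small remarks: simple connectedness is not needed for the injectivity in (1) (connectedness suffices; simple connectedness enters only in the existence direction of (2)); and the converse half of (2) is the thinnest step of your sketch --- you should make explicit that the vanishing of the curvature of the prolongation connection decomposes into the vanishing of the projective (Liouville) curvature together with the closedness of a $1$-form built from the trace of $\Gamma$ and the antisymmetric Ricci tensor, and that it is simple connectedness that lets you integrate that closed form to the global potential $\phi_0$ effecting the strong projective flattening.
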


\section{The affine structures $\mathcal{M}(c)$}\label{S3}
The following family of surfaces was introduced by 
D'Ascanio et al.~\cite{D17} in their study of geodesically complete homogeneous affine surfaces.
Let $\mathcal{M}(c):=(\mathbb{R}^2,\nabla)$ 
where the only (possibly) non-zero Christoffel symbols of $\nabla$ are
 $\Gamma_{22}{}^1=(1+c^2)x^1$ and $\Gamma_{22}{}^2=2c$. Replacing $x^2$ by $-x^2$ interchanges the roles
 of $\mathcal{M}(c)$ and $\mathcal{M}(-c)$ so we shall assume $c\ge0$ henceforth.
A direct computation shows
 \begin{equation}\label{E1.a}
 \rho=(1+c^2)dx^2\otimes dx^2\,.
 \end{equation}
 An affine connection is said to be {\it symmetric} if and only if $\nabla\rho=0$; one verifies that $\mathcal{M}(c)$ is affine symmetric
 if and only if $c=0$.

\begin{theorem}\label{Thm2}
Let $\mathcal{G}(c)$ be the set of all smooth maps from $\mathbb{R}^2$ to $\mathbb{R}^2$ of the form:
\smallbreak\centerline{$
T(\alpha,\beta,\gamma,\delta)(x^1,x^2):=(e^{\alpha}x^1+\beta e^{cx^2}\cos(x^2)+\gamma e^{cx^2}\sin(x^2),x^2+\delta)$.}
\smallbreak\noindent Then $\mathcal{G}(c)$ is a 4-dimensional Lie group which is diffeomorphic to $\mathbb{R}^4$ which acts transitively 
on $\mathbb{R}^2$ so $\mathcal{M}(c)$ is a homogeneous space. The Lie algebra of $\mathcal{G}(c)$ is
\smallbreak\centerline{
$\mathfrak{g}(c):=\operatorname{Span}\{x^1\partial_{x^1},e^{cx^2}\cos(x^2)\partial_{x^1},e^{cx^2}\sin(x^2)\partial_{x^1},\partial_{x^2}\}$.}
\smallbreak\noindent The vector fields of $\mathfrak{g}(c)$ are all complete and $\mathfrak{g}(c)=\mathcal{K}(\mathcal{M}(c))$
is the set of affine Killing vector fields of
$\mathcal{M}(c)$.
\end{theorem}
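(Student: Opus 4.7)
The plan is to handle the two main assertions — the identification $\mathfrak{g}(c) = \mathcal{K}(\mathcal{M}(c))$ and the Lie group structure on $\mathcal{G}(c)$ — by a direct PDE analysis of the Killing equation, followed by a direct check of the composition law. Writing a general vector field as $X = A(x^1,x^2)\partial_{x^1} + B(x^1,x^2)\partial_{x^2}$ and expanding $\mathcal{L}_X\nabla = 0$ in coordinates, since the only non-zero Christoffel symbols of $\mathcal{M}(c)$ are $\Gamma_{22}{}^1 = (1+c^2)x^1$ and $\Gamma_{22}{}^2 = 2c$, the system reduces to a manageable set of component equations indexed by $(i,j,l) \in \{1,2\}^3$.

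The $(1,1,\cdot)$ components give $\partial_{x^1}^2 A = \partial_{x^1}^2 B = 0$, so both $A$ and $B$ are at most affine in $x^1$. The $(2,2,2)$ component together with the $(1,2,2)$ component then forces the coefficient of $x^1$ in $B$ to vanish, so $B = B(x^2)$. Next, the $(1,2,1)$ equation reduces to $\partial_{x^1}\partial_{x^2}A = 0$, so $A = A_0(x^2) + \alpha\,x^1$ with $\alpha$ constant. Finally, splitting the remaining $(2,2,1)$ equation into its $x^1$-linear and $x^1$-independent parts, the former forces $B'(x^2) = 0$, so $B = \delta$ is constant, while the latter is the constant-coefficient linear ODE $A_0'' - 2c\,A_0' + (1+c^2)A_0 = 0$, whose characteristic roots $c \pm i$ yield $A_0(x^2) = \beta\,e^{cx^2}\cos x^2 + \gamma\,e^{cx^2}\sin x^2$. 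This reproduces exactly the claimed basis of $\mathfrak{g}(c)$ and shows that $\mathcal{K}(\mathcal{M}(c)) = \mathfrak{g}(c)$ is $4$-dimensional.

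For completeness, observe that every $X \in \mathfrak{g}(c)$ has the form $(\alpha x^1 + f(x^2))\partial_{x^1} + \delta\,\partial_{x^2}$, so $\dot x^2 = \delta$ integrates globally and then $\dot x^1 = \alpha x^1 + f(x^2(t))$ is a linear scalar ODE with smooth forcing whose solution exists for all $t \in \mathbb{R}$. To realize $\mathcal{G}(c)$ as a Lie group, the next step is to compute the composition $T(\alpha_1,\beta_1,\gamma_1,\delta_1) \circ T(\alpha_2,\beta_2,\gamma_2,\delta_2)$ explicitly, using the addition formulas for sine and cosine to rewrite $e^{c(x^2 + \delta_2)}\cos(x^2 + \delta_2)$ and its sine companion as linear combinations of $e^{cx^2}\cos x^2$ and $e^{cx^2}\sin x^2$: the composition is again of the same form, the induced group law on $(\alpha,\beta,\gamma,\delta)$ is smooth, $T(0,0,0,0) = \id$, and inverses are obtained by directly solving the composition equations. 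The parametrization $\mathbb{R}^4 \to \mathcal{G}(c)$ is injective (read off $\delta$ from the second coordinate of the image, then $\alpha$ from the coefficient of $x^1$, and finally $\beta,\gamma$ from two suitable evaluations), so $\mathcal{G}(c) \cong \mathbb{R}^4$ as a smooth manifold. Differentiating the four coordinate one-parameter subgroups at the origin recovers the prescribed basis of $\mathfrak{g}(c)$; transitivity on $\mathbb{R}^2$ is immediate (pick $\delta$ to match the second coordinates of source and target, then solve a single linear equation in $\beta$ to match the first); and since the generators are Killing and $\mathcal{G}(c)$ is connected, every element of $\mathcal{G}(c)$ is an affine diffeomorphism of $\mathcal{M}(c)$.

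The main technical obstacle will be the bookkeeping in the Killing PDE system: the variable coefficient $\Gamma_{22}{}^1 = (1+c^2)x^1$ couples the $x^1$-linear and $x^1$-independent parts of $A$ and $B$ across several of the component equations, so one must order the deductions carefully — first exploit $(1,1,\cdot)$, then pin down $B$, then $A_0$ — to uncouple the resulting ODEs. Everything else, including the group-theoretic packaging and the completeness check, is essentially routine verification.
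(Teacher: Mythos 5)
Your proof is correct, but it follows a genuinely different route from the paper's. The paper never touches the Killing PDE system directly: it first checks that $\{e^{cx^2}\cos(x^2),e^{cx^2}\sin(x^2),x^1\}$ solves the quasi-Einstein equation, invokes Theorem~\ref{Thm1} to conclude that $\mathcal{M}(c)$ is strongly projectively flat with $\mathcal{Q}(\mathcal{M}(c))$ equal to that span, and then deduces that each $T(\alpha,\beta,\gamma,\delta)$ is affine from the invariance $T^*\mathcal{Q}=\mathcal{Q}$ via Theorem~\ref{Thm1}(4); the identification $\dim\mathcal{K}(\mathcal{M}(c))=4$ is imported from the classification of Type~$\mathcal{A}$ structures with rank-one Ricci tensor in Brozos-V\'azquez et al., via the affine embedding $\Phi(u^1,u^2)=(e^{u^1},u^2)$ of the constant-Christoffel model $\mathcal{M}_0(c)$, and completeness then follows because the Killing algebra coincides with the Lie algebra of a globally acting group. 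You instead solve $\mathcal{L}_X\nabla=0$ by hand (your ordering of the component equations is right: the $(1,1,\cdot)$ equations give linearity in $x^1$, the $x^1$-coefficient of the $(2,2,2)$ equation combined with $(1,2,2)$ kills $\partial_{x^1}B$, and the $x^1$-split of $(2,2,1)$ yields $B'=0$ and the oscillator equation $A_0''-2cA_0'+(1+c^2)A_0=0$ with roots $c\pm i$), verify completeness by explicit integration, and obtain affineness of the $T$'s by exponentiating the Killing fields through the connected group. Your version is more self-contained and elementary, requiring neither Theorem~\ref{Thm1} nor the external classification; the paper's version is shorter given its toolkit and, importantly, establishes equation~(\ref{E1.b}) and the strong projective flatness of $\mathcal{M}(c)$ along the way, which are reused in Theorem~\ref{Thm3}, so your route would need to supply those separately. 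One small point to tidy: in the transitivity argument, solving for $\beta$ alone fails when $\cos(x^2)=0$; since $e^{cx^2}\cos(x^2)$ and $e^{cx^2}\sin(x^2)$ never vanish simultaneously, one solves for $\beta$ or $\gamma$ as appropriate.
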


\begin{proof} We follow the discussion in Gilkey et al.~\cite{GPV19}. A direct computation shows that
$\{e^{cx^2}\cos(x^2),e^{cx^2}\sin(x^2),x^1\}\subset\mathcal{Q}(\mathcal{M}(c))$.
Consequently, by Theorem~\ref{Thm1}, $\mathcal{M}(c)$ is strongly projectively flat and
\begin{equation}\label{E1.b}
\mathcal{Q}(\mathcal{M}(c))=\operatorname{Span}\{e^{cx^2}\cos(x^2),e^{cx^2}\sin(x^2),x^1\}\,.
\end{equation}
It is then immediate that $T(\alpha,\beta,\gamma,\delta)^*\mathcal{Q}({\mathcal{M}}(c))=\mathcal{Q}(\mathcal{M}(c))$ so
$T(\alpha,\beta,\gamma,\delta)$ is a diffeomorphism of $\mathbb{R}^2$ preserving the affine structure.  We show that $\mathcal{G}(c)$
is a Lie group with Lie algebra $\mathfrak{g}(c)$ by computing:
\begin{eqnarray*}
&&T(\alpha,\beta,\gamma,\delta)\circ T(\tilde\alpha,\tilde\beta,\tilde\gamma,\tilde\delta)\\
&&\quad= T(e^{\alpha+\tilde\alpha},e^{\alpha} \tilde\beta+\beta e^{c \tilde\delta} \cos (\tilde\delta)+\gamma e^{c \tilde\delta} \sin (\tilde\delta),\\
&&\qquad e^{\alpha} \tilde\gamma-\beta e^{c \tilde\delta} \sin (\tilde\delta)+\gamma e^{c \tilde\delta} \cos (\tilde\delta),\delta+\tilde\delta)\\
&&T(\alpha,\beta,\gamma,\delta)^{-1}=T(-\alpha,-e^{-\alpha-c \delta} (\beta \cos (\delta)-\gamma \sin (\delta)),\\
&&\qquad\qquad\qquad\qquad\qquad -e^{-\alpha-c \delta} (\beta \sin (\delta)+\gamma \cos (\delta)),-\delta).\qed
\end{eqnarray*}

Following the notation of Opozda~\cite{Op04}, an affine structure on $\mathbb{R}^2$ is said to be Type~$\mathcal{A}$
if the Christoffel symbols are constant; work of Arias-Marco and Kowalski~\cite{AMK08} and of Vanzurova~\cite{V13}
show that if the Ricci tensor of such a structure is non-zero, then it is not metrizable, i.e. it does not arise as the
Levi-Civita connection of a pseudo-Riemannian metric.
Let $\mathcal{M}_0(c)$ be the affine structure on $\mathbb{R}^2$ where the only (possibly) non-zero Christoffel symbols
are
$\Gamma_{11}{}^1=1$, $\Gamma_{22}{}^1=1+c^2$, and $\Gamma_{22}{}^2=2c$.
This is a Type~$\mathcal{A}$ geometric structure on $\mathbb{R}^2$
in the notation of Opozda~\cite{Op04}; it is linearly equivalent to the surfaces $\mathcal{M}_5^c$ of \cite{BGG18} but is in a
slightly more convenient form for our purposes.
 Let $(u^1,u^2)$ be coordinates on $\mathbb{R}^2$.  A direct computation shows
$$
\mathcal{Q}(\mathcal{M}_0(c))=\operatorname{Span}\{e^{cu^2}\cos(u^2),e^{cu^2}\sin(u^2),e^{u^1}\}\,.
$$
Let $\Phi(u^1,u^2)=(e^{u^1},u^2)$. We then have $\Phi^*\mathcal{Q}(\mathcal{M}(c))=\mathcal{M}_0(c)$ so
$\Phi$ is an affine embedding of $\mathcal{M}_0(c)$ in $\mathcal{M}(c)$. We have $\rho=(1+c^2)dx^2\otimes dx^2$.
Results of Brozos-V\'{a}zquez et al.~\cite{BGG18} show that if $\mathcal{M}$ is a Type~$\mathcal{A}$ structure on
$\mathbb{R}^2$ and if $\operatorname{Rank}\{\rho\}=1$, then $\dim\{\mathfrak{K}(\mathcal{M}(c))\}=4$.
Consequently, the Lie algebra of Killing vector fields $\mathcal{K}(\mathcal{M})$ is 4 -dimensional and is the Lie algebra of the Lie group $\mathcal{M}(c)$.
Consequently, every affine Killing vector field on $\mathcal{M}(c)$ is complete.
\end{proof}

Patera et al.~\cite{PSWZ76} classified the 4-dimensional Lie algebras. We follow their notation.  Let
$A_{4,12}$ be the 4-dimensional Lie algebra with bracket relations 
$[e_1,e_3]=e_1$, $[e_2,e_3]=e_2$, $[e_1,e_4]=-e_2$, $[e_2,e_4]=e_1$. Results of \cite{BGG18} show $\mathfrak{K}(\mathcal{M}_0(c))$
is isomorphic to $A_{4,12}$ and, consequently, $\mathfrak{K}(\mathcal{M}(c))$ is isomorphic to $A_{4,12}$ as well.

\begin{theorem}\label{Thm3}
\ \begin{enumerate}
\item The geometries $\mathcal{M}(c)$ and $\mathcal{M}(\tilde c)$ are not locally affine equivalent for $c\ne\tilde c$.
\item The geometries $\mathcal{M}(c)$ and $\mathcal{M}(\tilde c)$ are not strongly projectively equivalent for $c\ne\tilde c$.
\item Let $\phi(x^1,x^2)=(e^{cx^2}x^1,x^2)$ define a diffeomorphism of $\mathbb{R}^2$. Then 
$\phi^*\mathcal{M}(c)$ is strongly projectively equivalent to $\mathcal{M}(0)$.
\end{enumerate}\end{theorem}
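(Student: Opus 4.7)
The plan is to use Theorem~\ref{Thm1} for parts (2) and (3), and to construct a scalar affine invariant from $\rho$ and $\nabla\rho$ for part (1).

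For part (3), I would first compute $\phi^*\mathcal{Q}(\mathcal{M}(c))$ using the basis in~\eqref{E1.b}. Since $\phi(x^1,x^2)=(e^{cx^2}x^1,x^2)$ fixes $x^2$, the basis elements $e^{cx^2}\cos(x^2)$ and $e^{cx^2}\sin(x^2)$ are functions of $x^2$ alone and are preserved by $\phi^*$, while $x^1$ pulls back to $e^{cx^2}x^1$. Hence $\phi^*\mathcal{Q}(\mathcal{M}(c))=e^{cx^2}\mathcal{Q}(\mathcal{M}(0))$. Setting $\psi:=cx^2$, Theorem~\ref{Thm1}(3) identifies this with $\mathcal{Q}({}^{\psi}\mathcal{M}(0))$. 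Both $\phi^*\mathcal{M}(c)$ and ${}^{\psi}\mathcal{M}(0)$ are strongly projectively flat, so Theorem~\ref{Thm1}(4) applied to the identity map identifies $\phi^*\mathcal{M}(c)$ with ${}^{\psi}\mathcal{M}(0)$, which is by construction strongly projectively equivalent to $\mathcal{M}(0)$.

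For part (2), I would argue by contradiction. If $\psi\in C^\infty(\mathbb{R}^2)$ provides a strong projective equivalence from $\mathcal{M}(c)$ to $\mathcal{M}(\tilde c)$, then Theorem~\ref{Thm1}(3) yields $e^\psi\mathcal{Q}(\mathcal{M}(c))=\mathcal{Q}(\mathcal{M}(\tilde c))$. Expanding $e^\psi\cdot x^1$ in the basis of the right-hand side gives constants $A,B,C$ with
$$
e^\psi x^1=A e^{\tilde cx^2}\cos(x^2)+B e^{\tilde cx^2}\sin(x^2)+Cx^1,
$$
and setting $x^1=0$ forces $A=B=0$, so that $e^\psi\equiv C>0$ is constant. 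Expanding $Ce^{cx^2}\cos(x^2)\in\mathcal{Q}(\mathcal{M}(\tilde c))$ in the same basis yields $Ce^{cx^2}\cos(x^2)=A'e^{\tilde cx^2}\cos(x^2)+B'e^{\tilde cx^2}\sin(x^2)+C'x^1$; independence of the left-hand side from $x^1$ gives $C'=0$, and the remaining identity then forces $c=\tilde c$ by comparing exponential growth rates as $x^2\to\infty$. The main technical hurdle I anticipate is this step of pinning $\psi$ to a constant globally on $\mathbb{R}^2$.

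For part (1), I would construct a scalar affine invariant of $\mathcal{M}(c)$. Direct computation yields $\nabla\rho=-4c(1+c^2)\,dx^2\otimes dx^2\otimes dx^2=\omega\otimes\rho$ with $\omega:=-4c\,dx^2$, so $\rho$ is recurrent; both $\rho$ and $\omega$ are invariants of the affine structure, and $\omega$ is unambiguously determined because $\rho$ is nowhere vanishing. Since $\rho$ has rank one and $\omega$ lies in its image, the scalar $f$ defined by $\omega\otimes\omega=f\rho$ is itself a canonical affine invariant, and evaluation gives $f=16c^2/(1+c^2)$. This function is strictly monotone on $c\ge 0$, so $f(c)\ne f(\tilde c)$ whenever $c\ne\tilde c\ge 0$, ruling out any local affine equivalence between $\mathcal{M}(c)$ and $\mathcal{M}(\tilde c)$.
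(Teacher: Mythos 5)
Your proposal is correct and follows essentially the same route as the paper: part (1) via the scalar invariant $16c^2/(1+c^2)$ built from $\rho$ and $\nabla\rho$, and parts (2) and (3) via Theorem~\ref{Thm1} applied to $\mathcal{Q}(\mathcal{M}(c))$. The only difference is that you supply details the paper leaves out — the explicit verification that no function $h$ can satisfy $e^h\mathcal{Q}(\mathcal{M}(c))=\mathcal{Q}(\mathcal{M}(\tilde c))$, and a self-contained derivation (via the recurrence $\nabla\rho=\omega\otimes\rho$) of the invariance of the scalar that the paper instead cites from \cite{BGG18}.
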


\begin{proof}
Let $\alpha(c):=\nabla\rho(\partial_{x^2},\partial_{x^2};\partial_{x^2})^2\rho(\partial_{x^2},\partial_{x^2})^3$. Results of \cite{BGG18}
show that $\alpha(c)$ is an affine invariant in this setting. One computes $\alpha(\mathcal{M}(c))=\frac{16c^2}{1+c^2}$. This shows that
$\mathcal{M}(c)$ is locally affine equivalent to $\mathcal{M}(\tilde c)$ if and only if $c=\tilde c$; these geometries are all distinct.
Suppose $c\ne\tilde c$. There is no function $h$ so that $\mathcal{Q}(\mathcal{M}(c))=e^h\mathcal{Q}(\mathcal{M}(\tilde c))$.
Consequently, by Theorem~\ref{Thm1}, these geometries are not strongly projectively equivalent. However, we verify that 
$\phi^*\mathcal{Q}(\mathcal{M}(c))=e^{cx^2}\mathcal{Q}(\mathcal{M}(0))$. Consequently, by
Theorem~\ref{Thm1} $\phi^*\mathcal{M}(c)$ is strongly projectively equivalent to $\mathcal{M}(0)$. ~\qed
\end{proof}

The geometry $\mathcal{M}(0)$ is geodesically complete; the geometries $\mathcal{M}(c)$ for $c>0$ are geodesically incomplete.
The following result follows from a direct computation.

\begin{theorem}\label{Thm4} 
\ \begin{enumerate}
\item The curves $
\sigma_{u,v;a,b}(t)=\left\{\begin{array}[l]{ll}
(u\cos(bt)+\frac ab\sin(bt),bt+v)&\text{ if }b\ne0\\
(at+u,0)&\text{ if }b=0
\end{array}\right\}$ are geodesics for the geometry $\mathcal{M}(0)$
with initial position $(u,v)$ and initial velocity $(a,b)$. $\mathcal{M}(0)$ is geodesically complete.
\item For $c>0$, let $\tau(t)=\log(1+2bct)$. Then the curves
$$
\sigma_{u,v;a,b}(t)=\left\{\begin{array}{ll}
(\frac{\sqrt{1+2bct}}b(bu\cos(\frac{\tau(t)}{2c})+(a-bcu)\sin(\frac{\tau(t)}{2c}),v+\frac{\tau(t)}{2c})&\text{ if }b\ne0\\
(at+u,0)&\text{ if }b=0\end{array}\right\}
$$
are the geodesics of $\mathcal{M}(c)$ with initial position $(u,v)$ and initial velocity $(a,b)$. $\mathcal{M}(c)$ is geodesically incomplete since
these curves are only defined for the parameter range $1+2bct>0$. 
\end{enumerate}\end{theorem}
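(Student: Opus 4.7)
The plan is to write down the geodesic ODE system directly from the Christoffel symbols and exploit the fact that the second equation decouples completely. With $\Gamma_{22}{}^1=(1+c^2)x^1$ and $\Gamma_{22}{}^2=2c$ the only nonzero symbols, the geodesic equations read
$$
\ddot x^1 + (1+c^2)\,x^1\,(\dot x^2)^2 = 0, \qquad \ddot x^2 + 2c\,(\dot x^2)^2 = 0.
$$
Setting $y=\dot x^2$ reduces the second equation to the Bernoulli equation $\dot y=-2cy^2$; with $y(0)=b$ this integrates to $y(t)=b/(1+2bct)$, and a further integration gives $x^2(t)=v+\tau(t)/(2c)$ when $c\ne 0$ and $b\ne 0$, while $b=0$ forces $x^2\equiv v$ and then $\ddot x^1=0$ yields $x^1=at+u$ (the straight-line case that the theorem records).

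Next I would tackle the first equation with $(\dot x^2)^2$ now an explicit function of $t$. For $c=0$ we have $(\dot x^2)^2=b^2$ constant, so $\ddot x^1+b^2 x^1=0$ is the harmonic oscillator, whose solution with initial data $(u,a)$ is precisely the expression stated in part (1). For $c>0$ and $b\ne 0$ I would reparametrize via $w=\tau(t)/(2c)$ and make the ansatz $x^1(t)=\sqrt{1+2bct}\,h(w)=e^{cw}h(w)$; since $e^{cw}=\sqrt{1+2bct}$, the prefactor is designed to absorb the weight generated by the non-affine change of variable. Using $\dot w=b/(1+2bct)$, a direct differentiation converts the equation $\ddot x^1+(1+c^2)x^1(\dot x^2)^2=0$ into the constant-coefficient harmonic equation $h''(w)+h(w)=0$. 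The initial conditions $x^1(0)=u$, $\dot x^1(0)=a$ translate, after one differentiation, to $h(0)=u$ and $h'(0)=(a-bcu)/b$, and the unique solution $h(w)=u\cos w+\tfrac{a-bcu}{b}\sin w$ reproduces the formula of part (2).

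Finally, for the completeness claims, I would simply read off the maximal interval of existence. The formulas in (1) are real-analytic in $t\in\mathbb{R}$, so every geodesic of $\mathcal{M}(0)$ is defined for all time. For $c>0$ and $b\ne 0$, both $\sqrt{1+2bct}$ and $\log(1+2bct)$ require $1+2bct>0$; this is a proper half-line, and since the geodesic leaves every compact set as $1+2bct\to 0^+$, it cannot be extended. Hence $\mathcal{M}(c)$ is geodesically incomplete for $c>0$.

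The only real obstacle is the reparametrization in the $x^1$ equation: one must guess the correct factor $e^{cw}=\sqrt{1+2bct}$ so that the transformed equation loses the $(1+2bct)^{-2}$ weight and reduces to the clean $h''+h=0$. Once that ansatz is in place the remainder of the argument is substitution and verification of initial data.
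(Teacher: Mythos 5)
Your proposal is correct and follows essentially the same route as the paper, whose proof is simply the one-line remark that one verifies directly that the stated curves satisfy the geodesic equations with the given initial data and reads off the maximal domain $1+2bct>0$. You go slightly further by actually deriving the solutions (integrating the decoupled $\ddot x^2+2c(\dot x^2)^2=0$ and then reducing the $x^1$ equation to $h''+h=0$ via the substitution $x^1=e^{cw}h(w)$), which also supplies the uniqueness and maximality of the interval of existence that the paper's verification implicitly assumes.
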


\begin{proof}One verifies directly the curves $\sigma_{u,v;a,b}$ are geodesics with the given initial conditions. Since they are defined
for all time if $c=0$, $\mathcal{M}(0)$ is geodesically complete. On the other hand, they fail to be defined when $1+2bct=0$ and thus
$|mathcal{M}(c)$ is geodesically incomplete for $c>0$.
\end{proof}

We give below a picture of the geodesic structure at the origin $(u,v)=(0,0)$; we emphasize, it makes no difference since the geometry
is homogeneous. The geodesics fall into 2 families; those with $b>0$ all meet at the point $(0,\pi)$; those with $b<0$ 
all meet at the point $(0,-\pi)$, and those with $b=0$ lie along the $x^1$ axis. We also present similar pictures for the geodesics
that start at $(u,v)=(1,0)$ and at $(u,v)=(2,0)$.
\begin{figure}[H]\caption{Geodesic structure}
\includegraphics[height=4cm,keepaspectratio=true]{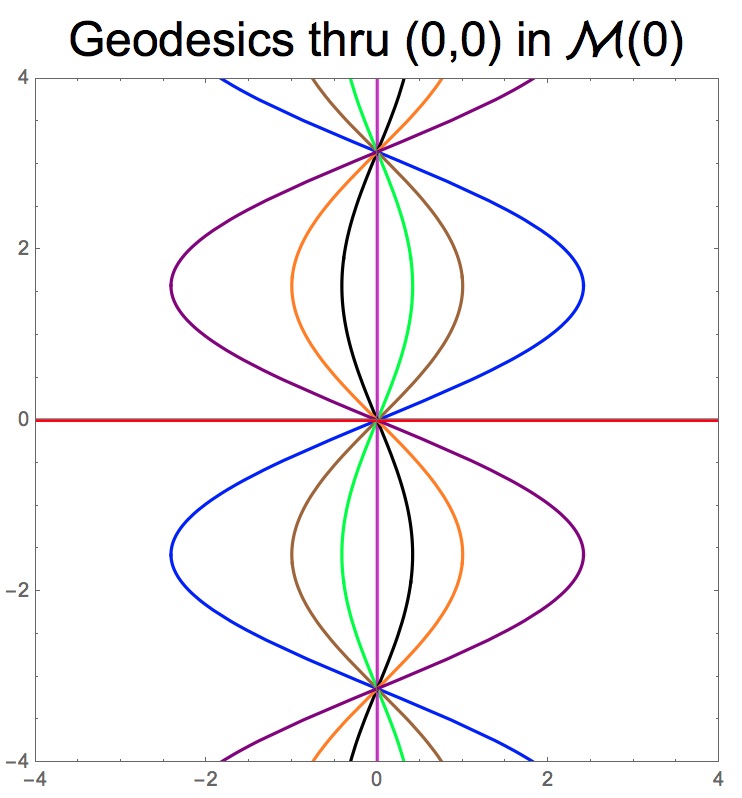}\quad
\includegraphics[height=4cm,keepaspectratio=true]{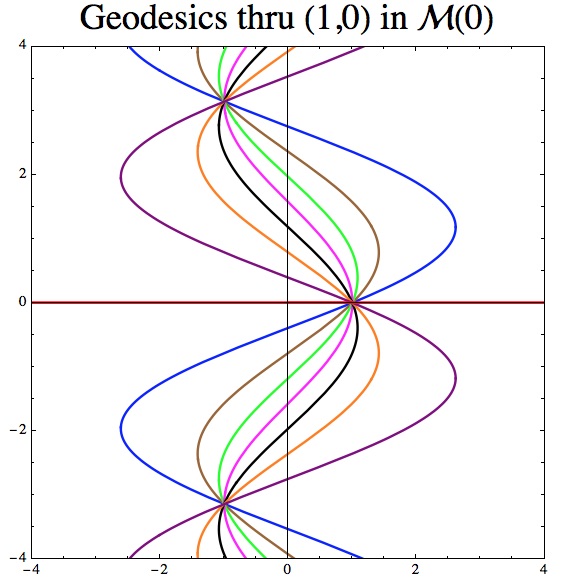}\quad
\includegraphics[height=4cm,keepaspectratio=true]{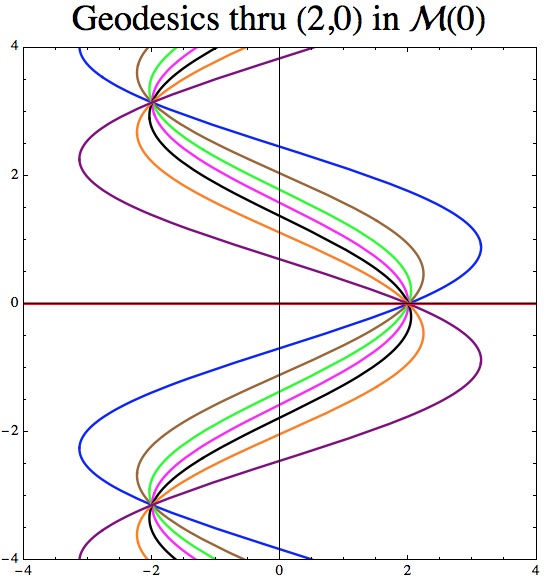}
\end{figure}

Let $\phi(x^1,x^2)=(e^{cx^2}x^1,x^2)$. By Theorem~\ref{Thm1}, $\phi^*\mathcal{M}(c)$ is strongly projectively equivalent to $\mathcal{M}(0)$.
Thus modulo the diffeomorphism $\phi$, the picture of the geodesic structure for $\mathcal{M}(c)$ is the same as that of $\mathcal{M}(0)$.

\subsection{An affine geometry on the cylinder}\label{S1.7} Let $\Phi(x^1,x^2)=(x^1,x^2+2\pi)$ generate
a fixed point free action of $\mathbb{Z}$
on $\mathbb{R}^2$; this corresponds to taking $\alpha=0$, $\beta=0$, $\gamma=0$, and
$\delta=\pi$ in Theorem~\ref{Thm2}.
We divide by this action to define an affine structure on the cylinder $\mathcal{C}:=(\mathbb{R}\times S^1,\nabla)$.
We verify immediately that if $c=0$, then
\begin{eqnarray*}
T(\alpha,\beta,\gamma,\delta)\Phi_k(x^1,x^2)&=&(e^{\alpha}x^1+\beta\cos(x^2+2k\pi)+\gamma\sin(x^2+2k\pi),x^2+2k\pi)\\
&=&(e^{\alpha}x^1+\beta\cos(x^2)+\gamma\sin(x^2),x^2+2k\pi)\\&=&\Phi_kT(\alpha,\beta,\gamma,\delta)(x^1,x^2)
\end{eqnarray*}
so $\Phi$ is in the center of the group $\mathcal{G}(0)$ and hence $\mathcal{G}(0)$ extends to a transitive
affine action on $\mathcal{C}$; this is a homogeneous geometry.
All the geodesics with a non-trivial vertical component close smoothly and where the
horizontal geodesic is the alienated geodesic. This is the desired quasi Zoll affine geometry. 
The exponential map is surjective on $\mathbb{R}^2$; it is not surjective on $\mathbb{R}^2$.
It is globally affine homogeneous, affine geodesically complete,
and affine Killing complete.
\subsection{An affine geometry on the M\"obius strip} Let $\Psi(x^1,x^2):=(-x^1,x^2+\pi)$; this
generates a fixed point free action of $\mathbb{Z}$ on $\mathbb{R}^2$. Let
$\mathcal{L}:=\mathbb{R}^2/\Psi(\mathbb{Z})$ be the quotient; this is the M\"obius strip.
In a purely formal sense,
this corresponds to taking
$\alpha=\pi\sqrt{-1}$, $\beta=\gamma=0$, and $\delta=\pi$ in Theorem~\ref{Thm2}. We compute
\begin{eqnarray*}
T(\alpha,\beta,\gamma,\delta)\Psi(x^1,x^2)&=&(-e^\alpha x^1+\beta\cos(x^2+\pi)+\gamma\sin(x^2+\pi),x^2+\pi)\\
&=&(-e^\alpha x^1-\beta\cos(x^2)-\gamma\sin(x^2),x^2+\pi)\\
&=&\Psi T(\alpha,\beta,\gamma,\delta)(x^1,x^2)
\end{eqnarray*}
Thus this is a homogeneous affine structure as well and we have a double cover $\mathbb{Z}_2\rightarrow\mathcal{C}\rightarrow\mathcal{L}$ on which $\mathcal{G}(0)$ acts equivariantly. 
With this structure, the M\"obius strip is affine homogeneous, geodesically complete, affine complete, and almost Zoll.

Let $\tau(t)=\log(1+2bct)$. Then
$$
\sigma_{u,v;a,b}(t)=\left\{\begin{array}{ll}
(\frac{\sqrt{1+2bct}}b(bu\cos(\frac{\tau(t)}{2c})+(a-bcu)\sin(\frac{\tau(t)}{2c}),v+\frac{\tau(t)}{2c})&\text{ if }b\ne0\\
(at+u,0)&\text{ if }b=0\end{array}\right\}\,.
$$
This geometry is geodesically incomplete; it is only defined for the parameter range $1+2bct>0$. 
Still, all geodesics thru the origin either focus vertically above or below the $x^1$-axis
or are horizontal and the general pattern is the same. A geodesic $\sigma$ is an alienated geodesic 
if and only $\rho(\dot\sigma,\dot\sigma)=0$. Dividing by $\mathbb{Z}$ yields
an affine quasi Zoll geometry. This geometry is locally homogeneous but not globally homogeneous since 
$\Phi$ is not in the center
of $\mathcal{G}(c)$ for $c\ne0$ owing to the presence of the exponential factor $e^{cx^2}$.

Let $\check T(\alpha,\delta)(x^1,x^2)=(\alpha x^1,x^2+\delta)$ define an affine action of 
$(\mathbb{R}-\{0\})\times\mathbb{R})$ on $\mathbb{R}^2$. This action commutes with 
$\Phi$; there are 2 $\mathcal{C}$ orbits -- the horizontal axis
and everything else. Thus this geometry is ``almost" affine homogeneous; the complement of the alienated geodesic
thru (0,0) is homogeneous as is the alienated geodesic thru (0,0). We also obtain an almost Zoll geometry on the
M\"obius strip.

\subsection{Speed} We have $\rho=(1+c^2)dx^2\otimes dx^2$. We use $\rho$ to define a positive semi-definite inner
product and let $\rho(X,X)$ be the ``speed". We suppose $c\ne0$. We compute
$$\rho(\dot\sigma_{u,v,a,b},\dot\sigma_{u,v,a,b})=\left\{\begin{array}{ll}
(1+c^2)\frac{2b}{(1+2bct)^2}&\text{if }b\ne0\\0&\text{if }b=0\end{array}\right\}\,.
$$
Thus the alienated geodesics are the null geodesics in the geometry $\mathcal{M}(c)$. Although the remaining
geodesics all return to the basepoint in the cylinder, the speed increases to $\infty$ as $t\rightarrow-\frac1{2bc}$;
the return to the basepoint occurs more and more rapidly.

\subsection{The projective tangent bundle} We digress briefly to relate this example
to the results of LeBrun and Mason~\cite{LM02}. If $M$ is a smooth manifold, let $\mathbb{P}(M)$ be the projective
tangent bundle. If $\nabla$ is an affine structure on $M$, the tangent of lifted geodesics defines a natural
foliation on $\mathbb{P}(M)$; the affine structure is said to be {\it tame} if this structure
on $\mathbb{P}(M)$ is locally trivial. LeBrun and Mason showed (see Lemma~2.7 and Lemma~2.8)
\begin{lemma} Suppose that $\mathcal{M}$ is an affine tame Zoll manifold. 
\begin{enumerate}
\item The universal cover of $M$ with the induced affine structure is tame Zoll.
\item $M$ is compact and any two points of $M$ can be joined by an affine geodesic.
\item $M$ has finite fundamental group.
\end{enumerate}
\end{lemma}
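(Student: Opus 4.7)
The plan is to treat the three statements in order, using the tame foliation on $\mathbb{P}(M)$ as the central tool, and to deduce compactness and finiteness of $\pi_1$ from the compactness of leaves together with geodesic completeness (which is implicit in the Zoll hypothesis since each geodesic is a closed curve).

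For part (1), I would begin with the observation that the covering map $\pi\colon\tilde M\to M$ induces a covering $\mathbb{P}(\pi)\colon\mathbb{P}(\tilde M)\to\mathbb{P}(M)$, and that the geodesic foliation on $\mathbb{P}(\tilde M)$ is exactly the pullback of the geodesic foliation on $\mathbb{P}(M)$. Local triviality is a local property and is preserved under pullback along a local diffeomorphism, so tameness passes immediately to $\tilde M$. The delicate point is that each lifted geodesic must close: a geodesic $\tilde\gamma$ in $\tilde M$ projects to a closed geodesic $\gamma$ in $M$, and it suffices to show that some finite iterate of $\gamma$ lifts to a closed curve. I would argue this by showing that the local product structure provided by tameness, together with the compactness of the corresponding leaf in $\mathbb{P}(M)$, forces the lifted leaf in $\mathbb{P}(\tilde M)$ to be compact: otherwise the leaf space of the pullback foliation would fail to be Hausdorff in a neighborhood of the image point, contradicting the local triviality we just transferred.

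For part (2), I would use the exponential map at a point $p\in M$. Tameness produces a continuous (in fact smooth) period function on $\mathbb{P}(T_pM)$, so the union of the closed geodesics through $p$ is the continuous image of the compact manifold $\mathbb{P}(T_pM)\times[0,L]$ for a uniform bound $L$ on the periods. This image is compact, contains a neighborhood of $p$ by the usual local inverse of the exponential map, and is therefore both open and closed in $M$; since $M$ is connected, $M$ equals this image. This simultaneously yields compactness of $M$ and the fact that any $q\in M$ lies on an affine geodesic through $p$.

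For part (3), I would fix an auxiliary Riemannian metric on $M$, pull it back to $\tilde M$, and use the fact that every class in $\pi_1(M,p)$ is represented by a closed geodesic through $p$ (by part (2) applied to lifts). The uniform bound on periods together with compactness of $M$ gives a uniform bound on the Riemannian length of a geodesic representative, so the lifts have uniformly bounded Riemannian distance from a chosen preimage of $p$ in $\tilde M$; since the deck group acts properly discontinuously by isometries on this pulled-back metric, there are only finitely many such lifts, and hence $\pi_1(M)$ is finite. The principal obstacle in the whole argument is the closedness of lifted leaves in step (1): this is where tameness and the Zoll condition must be used simultaneously, and everything downstream depends on it, since without it the foliation on $\mathbb{P}(\tilde M)$ need not have compact leaves and the counting argument in (3) collapses.
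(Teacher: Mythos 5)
You should first be aware that the paper does not prove this lemma at all: it is quoted from LeBrun and Mason (\cite{LM02}, Lemmas~2.7 and 2.8) purely as background, so there is no in-paper argument to compare yours against. Judged on its own terms, your reconstruction has genuine gaps at precisely the points you yourself flag as delicate.

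The most serious is in part (1). You claim that if a lifted leaf in $\mathbb{P}(T\tilde M)$ were non-compact, the leaf space of the pulled-back foliation would fail to be Hausdorff, contradicting the transferred local triviality. That inference is false: the product foliation of $\mathbb{R}\times D$ by lines $\mathbb{R}\times\{\text{pt}\}$ is locally trivial, has non-compact leaves, and has a Hausdorff leaf space --- and this is exactly what the preimage of a product neighborhood $L\times D$ looks like when a lifted geodesic fails to close. Whether a geodesic loop lifts to a closed curve in the universal cover is governed by whether its class in $\pi_1(M)$ is trivial (or torsion), which is essentially the content of part (3); so as written your argument for (1) is either unsupported or circular, and the real content of the lemma --- excluding a flat-torus-like situation in which the closed geodesics are homotopically essential --- is not addressed. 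A second gap: your parenthetical that geodesic completeness is implicit in the Zoll hypothesis is contradicted by this very paper's examples, since $\mathcal{M}(c)$ for $c>0$ descends to a cylinder on which the relevant geodesics are closed curves while the affine parametrization blows up in finite time (an affine geodesic may return to its starting point with a rescaled velocity). Consequently the ``period function'' and the uniform bound $L$ on affine periods invoked in part (2) need not exist; the correct substitute is the saturation of the compact fibre $\mathbb{P}(T_pM)$ under the tame foliation, which is compact. Even granting that, ``contains a neighborhood of $p$ and is therefore both open and closed'' is not a valid step: a compact set containing a neighborhood of one point need not be open, and openness of the set of points reachable from $p$ requires a separate argument (this is where LeBrun and Mason do real work). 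Finally, once (1) and (2) are in hand, (3) is immediate --- a compact, simply connected universal cover forces the fibre of the covering, hence $\pi_1(M)$, to be finite --- so the auxiliary-metric length-counting argument is unnecessary even where it could be repaired.
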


Our examples show that these results fail for almost Zoll structures. Let $\mathcal{M}$ is be an almost Zoll surface.
Associating to any point of $M$ the tangent to the alienated geodesic through that point defines a natural section
to $\mathbb{P}(M)$; let $\tilde{\mathbb{P}}(M)$ be the complement of this section. We adopt the notation of
Section~\ref{S1.7} to define $\mathcal{C}$; the alienated geodesics are the horizontal geodesics;
$\sigma_{u,v;a,b}$ is not alienated if and only if $b\ne0$. Since we are working projectively, we may set $b=1$. Let 
$\chi(r,s,t):=\dot\sigma_{r,0,s,1}(t)$ parametrize $\tilde{\mathbb{P}}(\mathcal{C})$; this identifies
$\tilde{\mathbb{P}}(\mathcal{C})$ with $\mathbb{R}\times\mathbb{R}\times S^1$ since, of course, $\sigma_{r,0,s,1}(t)$
is periodic with period $2\pi$ in $t$. This shows the foliation of $\tilde{\mathbb{P}}(TM)$ by lifted geodesics is a trivial
circle bundle over $\mathbb{R}^2$ and hence is tame. Similarly, if we lift to the universal cover, the 
foliation of $\mathbb{P}(T\mathbb{R}^2)-S(\mathbb{R}^2)$ by lifted geodesics is a trivial $\mathbb{R}$ bundle over 
$\mathbb{R}^2$ and hence tame. Clearly, however, the affine structure on $\mathbb{R}^2$ is no longer almost
Zoll and we can not join any two points of $\mathbb{R}^2$ by geodesics. And the cylinder does not have finite
fundamental group. On the other hand, the cylinder is the oriented double cover of the M\"obius strip.
\subsection{Global topology} As noted above, the tangent to the alienated geodesic thru any point of an almost Zoll manifold
is a section to $\mathbb{P}(TM)$. Consequently, if $M$ is compact, then the Euler-Poincare characteristic of $M$ vanishes.
Thus, in particular, the only compact surfaces which could potentially admit an almost Zoll structure are the torus and
the Klein bottle. The example we have constructed passes to the cylinder and the M\"obius strip; it does not pass to the
torus or the Klein bottle. We do not know if these admit an almost Zoll structure but we suspect the answer is no.

\section{Effect of the fundamental group} Up to affine equivalence, there is a unique surface with a $\mathbb{Z}_3$
symmetry. Let $T\in\operatorname{GL}(2,\mathbb{R})$ satisfy $T^3=\operatorname{id}$ and $T\ne\operatorname{id}$.
Let $e_2:=Te_1$ and set $Te_2=ae_1+be_2$ so that
$$T=\left(\begin{array}{ll}0&a\\1&b\end{array}\right)\quad\text{so}\quad
T^3=\left(\begin{array}{cc}a b & a^2+b^2 a \\b^2+a & a b+\left(b^2+a\right) b \\\end{array}\right)\,.
$$
The equation $T^3=\operatorname{id}$ forces $a=b=-1$ and we have
$$\begin{array}{ll}
T=\left(\begin{array}{cc}0&-1\\1&-1\end{array}\right),\quad&
T\left(\begin{array}{c}1\\0\end{array}\right)=\left(\begin{array}{c}0\\1\end{array}\right),\\
T\left(\begin{array}{c}0\\1\end{array}\right)=\left(\begin{array}{c}-1\\-1\end{array}\right),\quad&
T\left(\begin{array}{c}-1\\-1\end{array}\right)=\left(\begin{array}{c}1\\0\end{array}\right)\,.
\end{array}$$
We suppose $\mathcal{Q}=\operatorname{Span}\{e^{x^1},e^{x^2},e^{-x^1-x^2}\}$. Then $T^*\mathcal{Q}=\mathcal{Q}$.
Our ansatz tells us this is realized by
\begin{eqnarray*}
&&\Gamma\left(\frac{b^2+a-1,\ \ b^2-b,\ \ ba,\ \ ba,\ \ a^2-a,\ \ b+a^2-1}{b+a-1}\right)\\
&=&\Gamma\left(\frac{1}{3},-\frac{2}{3},-\frac{1}{3},-\frac{1}{3},-\frac{2}{3},\frac{1}{3}\right)\,.
\end{eqnarray*}
We let $M:=\{\mathbb{R}^2-\{0\}\}/\mathbb{Z}_3$; topologically, this is the cylinder. We give this the inherited affine
structure to define $\mathcal{M}$. We then have $\mathfrak{K}(\mathcal{M})=\{0\}$ and $\mathcal{Q}(\mathcal{M})$
is 1-dimensional and may be identified with $e^{x^1}+e^{x^2}+e^{-x^1-x^2}$. This is the type $\mathcal{A}$ surface with
$\operatorname{Rank}\{\rho\}=2$ with a $\mathbb{Z}_3$ symmetry. The Ricci
tensor takes the form
$$
\rho=\frac13\left(\begin{array}{cc}
-2&-1\\-1&-2
\end{array}\right)\,.
$$
This is the cusp point in the moduli space of negative definite Ricci tensors.
\subsection*{Research partially supported by Project MTM2016-75897-P (AEI/FEDER, Spain)}

\end{document}